\newcommand*{\dashedrightarrow}[1][]{\mathbin{\tikz [baseline=-0.25ex,-latex, ->,densely dashed  ] \draw [#1] (0pt,0.5ex) -- (1.3em,0.5ex);}}%
\newtheorem{theorem}{Theorem}[section]
\newtheorem{corollary}[theorem]{Corollary}
\newtheorem{lemma}[theorem]{Lemma}
\newtheorem{conjecture}[theorem]{Conjecture}
\theoremstyle{definition}
\newtheorem{definition}[theorem]{Definition}
\newtheorem{remark}[theorem]{Remark}
\newtheorem{example}[theorem]{Example}
\numberwithin{equation}{section}
\newcommand{\fr}{{\bf F}}
\newcommand{\ff}{\mathbb F}
\newcommand{\fq}{\mathbb{F}_q}
\newcommand{\fqt}{\mathbb{F}_q(t)}
\newcommand{\frt}{\ff_q[t]}
\newcommand{\qq}{\mathbb Q}
\newcommand{\pp}{\mathbb P}
\newcommand{\af}{\mathbb A}
\newcommand{\nn}{\mathbb N}
\newcommand{\zz}{\mathbb{Z}}
\newcommand{\kk}{\mathcal K}
\newcommand{\rf}[2]{{#1 \ref{#2}}}
\newcommand{\w}{\omega}
\newcommand{\mor}{\operatorname{Mor}}
\newcommand{\ee}{\mathcal{E}}
\newcommand{\fc}{\mathcal{F}}
\newcommand{\X}{\mathcal{X}}
\newcommand{\kb}{\bar{\kappa}}
\newcommand{\vb}{\bar{V}}
\newcommand{\Conceicao}{Concei{\c c}\~ao}
\newcommand{\Spec}{\operatorname{Spec}}
\begin{document}

%%%%% To ease editing, for IMPAN journals add:

\baselineskip=17pt

%%%%%%%%%%%

%% In the running head, replace first names by initials 
%% and give an abbreviation of the title.

\title[Elliptic curves over $\fqt$ with a large set of integral points]{Elliptic curves over function fields with a large set of integral points}
\author{Ricardo P. \Conceicao}
\address{Oxford College of Emory University. 100 Hamill Street, Oxford, Georgia 30054.}
\email{rconcei@emory.edu}
\subjclass[2000]{Primary 54C40, 14E20; Secondary 46E25, 20C20}
%\date{May 2009}

\begin{abstract}
\indent We construct isotrivial and non-isotrivial elliptic cur\-ves  over $\mathbb{F}_q(t)$ with an arbitrarily large set of separable integral points.
As an application of this construction, we prove that there are isotrivial log-general type varieties over $\fqt$ with a Zariski dense set of separable integral points. This provides a counterexample to a natural translation of the Lang-Vojta conjecture to the function field setting. We also show that our main result  provides examples of elliptic curves with an explicit and arbitrarily large set of linearly independent points.%, and of function fields with large $m$-class rank.
 \end{abstract}
\maketitle

\subjclass[2010]{Primary 	11G05; Secondary 11G35 }

\keywords{Elliptic curves, integral points, Lang-Vojta conjecture, function fields}

\maketitle

\section{Introduction}\label{sec:intro}
%\subsection{Uniformity of Integral Points in quadratic twists}
%Let $E:y^2=x^3+ax+b$ be an elliptic curve with  $a,b\in\zz$. The well known fact that the set of integral points $E(\zz)$ is finite was proved by C. L. Siegel circa 1928 (see \cite{silverman_arithmetic_1986}). 

 Inspired by the work of Caporaso,  Harry and  Mazur \cite{caporaso_uniformity_1997},  Abra\-mo\-vi\-ch \cite{abramovich_uniformity_1997} asked if there could exist a uniform bound for the number of integral points on elliptic curves over the rationals\footnote{Our discussion here holds over general number fields, but for simplicity of exposition we only give the statements over $\qq$.}. As he pointed out, this cannot be a true statement: simply choose any elliptic curve with positive rank, and make a change of coordinates to  ``clean the denominators" of an arbitrary number of rational points. This kind of construction forces a change on the integral model of the elliptic curve, and it is natural to ask whether some kind of uniformity on the number of integral points holds if  the integral model is  restricted.  Abramovich  \cite[Theorem 2]{abramovich_uniformity_1997} gives a positive answer to this question for  stably minimal models of elliptic curves over $\qq$ under the assumption of the following conjecture.% of Lang and  Vojta.  %(Conjecture \ref{conj:langvojtac} below). He shows that this conjecture implies a uniform bound on the number of  integral points  on  stably minimal models of elliptic curves over $\qq$. The following  version of his uniformity results was the main inspiration for our  work.

\begin{conjecture}[Lang-Vojta]\label{conj:langvojtac}
Let $X$ be a variety of  log-general type defined over  $\qq$ and let $\mathcal{X}$ be any model of $X$ over ${\zz}$. Then the set of integral points $\mathcal{X}({\zz})$ is not Zariski dense in $\mathcal{X}$.
\end{conjecture}

%The Lang-Vojta conjecture proposes a generalization of Siegel's theorem for varieties of dimension greater than 1 and it is surprising that this and similar conjectures impose such restrictions on the number of integral and rational points on families of curves. %More precisely:
Here we focus on the following simpler  (but already non-trivial)  uniformity result, which is also a consequence of the Lang-Vojta conjecture.%version of  Abramovich's uniformity results.

\begin{theorem}[Section 3, \cite{abramovich_uniformity_1997}]\label{the:uniformityofquadtwistsnumber}
Let $y^2=x^3+Ax+B$ be an elliptic curve with $A$ and $B$ integers. Suppose the Lang-Vojta conjecture is true over $\qq$. Then for any square-free non-zero integer $D$, the number of integral points on the quadratic twist $Dy^2=x^3+Ax+B$ is bounded independently of $D$.
\end{theorem}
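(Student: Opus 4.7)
The plan is to replace the one-parameter family of quadratic twists by a single ambient variety on which the $S$-integral points of all twists sit simultaneously, and then to invoke Lang--Vojta for that variety. Concretely, any $S$-integral point $(x,y)$ on $E_d$ recovers $d = (x^3+Ax+B)/y^2$, so any two $S$-integral points $(x_1,y_1),(x_2,y_2)$ on the same twist satisfy
\[
y_1^2\,(x_2^3 + A x_2 + B) \;=\; y_2^2\,(x_1^3 + A x_1 + B).
\]
An $n$-tuple of $S$-integral points on a common $E_d$ therefore produces an $S$-integral point on the $d$-free subvariety $V_n \subset \mathbb{A}^{2n}$ cut out by these relations for $1 \leq i < j \leq n$. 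Away from $\prod_i y_i = 0$, the variety $V_n$ is birational to the $n$-fold fibered power of the universal quadratic twist surface $\ee \to \mathbb{A}^1_d$.

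The next step is to exhibit a smooth log-compactification $(\overline{V_n},D_n)$ that is of log-general type once $n$ is large. Following the Caporaso--Harris--Mazur template of \cite{caporaso_uniformity_1997}, this should come from Kodaira's canonical bundle formula applied to the elliptic surface $\ee \to \mathbb{A}^1$, combined with the standard positivity of the relative dualizing sheaf under fibered powers. Granted log-general type, the Lang--Vojta conjecture forces the $(S,D_n)$-integral points of $V_n$ into a proper closed subset $Z_n \subsetneq V_n$.

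A Noetherian induction on $Z_n$ then converts non-density into the uniform bound, and I would run it exactly as Abramovich does: components of $Z_n$ that dominate the $d$-line correspond to finitely many $\ee \to \mathbb{A}^1$-sections and therefore contribute at most a $d$-independent number of $S$-integral points to each fiber $E_d$, while components contained in a single fiber $E_d^n$ reduce the problem to $n$-tuples of $S$-integral points lying in a proper subvariety of $E_d^n$, which is handled by iterating the construction with a larger $n' > n$ and invoking Lang--Vojta again.

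The hard part will be verifying the log-general type property of $(\overline{V_n},D_n)$. The boundary $D_n$ inevitably absorbs contributions from the singular fibers of $\ee \to \mathbb{A}^1$, from the divisor $\prod_i y_i = 0$, and from any compactification at $d = \infty$; the positivity of $K_{\overline{V_n}} + D_n$ must be shown to dominate these defects by choosing $n$ large enough in terms of the discriminant of $E$ and of $S$. This is the geometric heart of the argument, and the entire uniform bound is ultimately extracted from the threshold $n = n(E,S)$ at which log-general type first kicks in.
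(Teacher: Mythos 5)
The theorem is attributed to Abramovich (\cite{abramovich_uniformity_1997}, \cite{abramovich_langs_1996}); the paper does not reprove it, but rather proves the function-field analogue, Theorem~\ref{the:uniformityofquadtwistsfqt}, in Section~\ref{sec:langvojta}. That proof is modeled on Abramovich's number-field argument and is the natural basis for comparison.

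Your sketch follows the general Caporaso--Harris--Mazur/Abramovich correlation template: form the $n$-fold fibered power, argue it becomes of log-general type once $n$ is large, invoke Lang--Vojta, and extract the uniform bound by Noetherian induction on the exceptional closed set. The paper does use exactly that template, but only for \emph{cubic} twists (Theorem~\ref{the:uniformityofcubictwistsfqt}, via the threefold $u^3=(x^2-x)(y^2-y)(z^2-z)$ together with the correlation Lemma~\ref{uniformitylemma}). For quadratic twists, Abramovich's route, which the paper transcribes to $\fqt$, is sharper and more elementary. The relevant log-general type variety is available already at $n=2$ after passing to the Kummer-type quotient $\kk : z^2 = f(x_1)f(x_2)$, which absorbs the sign ambiguity in $y$ and eliminates $d$. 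Each $E_d \times E_d$ maps to this single fixed surface by $(x_1,y_1,x_2,y_2) \mapsto (x_1,x_2,dy_1y_2)$; Lang--Vojta plus the log-general type of $\kk$ (the paper cites \cite{conceicao_thesis_2008}) put all integral points of $\kk$ on a fixed curve $g_0(x_1,x_2)+g_1(x_1,x_2)z = 0$, and fixing one integral point $(x_0,y_0)\in E_d$ converts that relation into a polynomial of degree at most $2\deg g+3$ satisfied by the $x$-coordinate of any other integral point, giving the bound $4\deg g + 6$ outright with no Noetherian induction.

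The genuine gap in your proposal is that you defer the log-general type verification to ``choose $n$ large enough in terms of the discriminant and $S$.'' That is precisely where the content of the theorem lives, and the answer is not an unspecified asymptotic threshold for the raw variety $V_n$ but the specific quotient $\kk$ at $n=2$: identifying and proving log-general type for that surface is the step your sketch is missing. Your Noetherian-induction stage is also not stated correctly. A component of $Z_n$ dominating $\mathbb{A}^1_d$ is typically a divisor in $V_n$, not one of finitely many sections of $\ee \to \mathbb{A}^1$, so the claim that such components ``contribute at most a $d$-independent number of points'' needs the full inductive descent of Lemma~\ref{uniformitylemma} rather than a one-line remark. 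Conversely, ``iterating the construction with a larger $n'>n$'' does nothing for components supported over a single $d$: there are only finitely many such $d$, and one handles them by Siegel's theorem and taking a maximum, which is exactly how the paper's proof of Theorem~\ref{the:uniformityofcubictwistsfqt} disposes of the bad fibers.
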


%Let $q$ be a power of a rational prime $p$,
Our present work follows a long tradition in arithmetic geometry of tes\-ting the plausibility of a statement over number fields by working with an analogue statement over function fields in positive characteristic. Inspired by Theorem \ref{the:uniformityofquadtwistsnumber}, we give a negative answer to the analogue question of whether there is a uniform bound for the number of separable integral points\footnote{See Section \ref{sec:prelim} for the definition of a separable integral point.}  on quadratic twists of elliptic curves over $\fqt$.

\begin{theorem}[cf. Corollary \ref{cor:maintheorem}]\label{the:maintheorem}
Let $n$ be a positive integer. Then over $\fq(t)$ the following elliptic curves have  a separable integral point for every odd divisor of $n$: 
\begin{enumerate}
 \item $(t^{q^n}-t)y^2=x^3-x$, when $q\equiv 3\mod 4$.
 \item $y^2-y=(t^{q^n}-t)x^3$, when $q\equiv 2\mod 3$.
\end{enumerate}
In particular, if we take $n$  with a  large number of odd divisors, we obtain examples of quadratic and cubic twists of  elliptic curves with a large number of separable integral points.
\end{theorem}

 %Also, as we explain in Section \ref{sec:prelim}, it is crucial that we consider the set of separable integral points rather than simply the set of integral points on elliptic curves over $\fqt$: the last one can be infinite in positive characteristic.
These are not  the only examples of elliptic curves  for which this type of unboundedness result  holds. Theorems \ref{the:cubquarttwist} and  \ref{the:non_isot} provide  other examples of isotrivial and non-isotrivial elliptic curves  with an arbitrarily large set of separable integral points. 

%The non-isotrivial examples of elliptic curves constructed here shows that, in contrast with another consequence of the Lang-Vojta conjecture \cite[Corollary 1]{abramovich_uniformity_1997}, the number of integral points on semistable elliptic curves over $\fqt$ is not uniformly bounded.

%The non-isotrivial examples of elliptic curves constructed here shows that the number of integral points on semistable elliptic curves over $\fqt$ is not uniformly bounded. %Just as in Theorem \ref{the:uniformityofquadtwistsnumber}, 
%This is  in contrast with  another consequence of the Lang-Vojta conjecture over $\qq$ \cite[Corollary 1]{abramovich_uniformity_1997}. 

In light of  Theorem \ref{the:uniformityofquadtwistsnumber} and our results on the unboundedness of the number of integral points on elliptic curves  over $\fqt$,  one could ask  whether  our construction can be used to produce counterexamples to a natural analogue    of the Lang-Vojta conjecture over function fields. We show  that this is possible in the isotrivial case.%:  there are isotrivial varieties over $\fqt$  of log-general type whose set of separable integral points is  Zariski dense. 

\begin{theorem}[Cf. Section \ref{sec:langvojta}]\label{the:counterexamples}
Suppose $q\equiv 3\mod 4$. The affine variety defined over $\fqt$ by
$$
z^2=(x^3-x)(y^3-y)
$$ 
is of log-general type and has  a Zariski dense set of separable integral points.
%and 
%$$
%\uu: u^3=(x^2-x)(y^2-y)(z^2-z)
%$$ 
%be  . Then $\kk$ and $\uu$ are of log-general type. 
%
%If  then the set of separable integral points of $\kk$ are  in $\kk$.
%The same is true for $\uu$, when $q\equiv 2\mod 3$.
\end{theorem}

In a different direction, we  present an application of our results that stems from another conjecture of Lang \cite[Conjecture 0.5]{hindry_canonical_1988}. This conjecture predicts that for a quasi-minimal model $E$ of an  elliptic curve over $\qq$, we have $|E(\zz)|<c^r$, where $c$ is a constant depending on $\qq$ and $r$ is the rank of $E(\qq)$. Consequently, one can argue that integral points on quasi-minimal model of elliptic curves over $\qq$ tend to be linearly independent. %Therefore, using the analogy between $\qq$ and $\fqt$, it is natural to expect that the rank of the elliptic curves given in Theorem \ref{the:maintheorem} is  arbitrarily large. 
Here we present more evidence in support of this claim by proving  that the explicit separable integral points constructed in Theorem \ref{the:maintheorem} are linearly independent. In particular, this proves the existence of quadratic twists of supersingular elliptic curves with arbitrarily high rank over $\fqt$. This well known result was first proved  by Shafarevic and Tate \cite{tate_rank_1967}. %who proved that. %, with $p>2$.  %: the existence of supersingular isotrivial elliptic curves with arbitrarily large rank over function fields. 
 One advantage of our construction over theirs is that  we explicitly produce a large set of linearly independent points. This is  an advantage that still holds in comparison to  previous constructions %that were used to  prove the unboundness of the rank of elliptic curves over function fields 
 (see \cite{elkies_mordell-weil_1994,  ulmer_elliptic_2002, bouw_ordinary_2004,  diem_ordinary_2007, berger_towers_2008}). %None of them provide explicitly the generators for the Mordell-Weil group or even a large set of linearly independent points. 
 Only recently,  Ulmer \cite{ulmer_explicit_legendre_2012,ulmer_mordell_jacobians_2012} has provided examples of explicit points on non-isotrivial elliptic curves over $\fqt$ that  generates a subgroup of the Mordell-Weill group with finite index. In a future work, we will show that our construction may also be used to produce other examples of elliptic curves with  this property.
 
We finish this introduction by saying a few words about the organization of this work. In Section \ref{sec:prelim} and \ref{sec:delsarte}, we set notation and describe the construction that is used to produce elliptic curves with an explicit rational point. Section \ref{sec:manyintegralpoints} and Section \ref{sec:non_isot} contain our examples of isotrivial and non-isotrivial  elliptic curves with a large set of  separable integral points. In Section \ref{sec:langvojta}, we give a proof of Theorem \ref{the:counterexamples}; and Section \ref{sec:largerankelliptic} deals with a proof of the unboundedness of the rank  of the elliptic curves described in Theorem \ref{the:maintheorem}.
\section{On the arithmetic of isotrivial elliptic curves}\label{sec:prelim}

%Let $K=\ff_q(C)$ be the function field of a smooth projective curve $C/\fq$.
{\bf Terminology:} Henceforth, $q$ denotes a power of an \emph{odd} prime $p$, $\fqt$ is a rational function field and $F'$ denotes the formal derivative of an element $F$ of $\fqt$.

\bigskip
%Varieties over $\fqt$ come in two flavors: isotrivial and non-isotrivial. 

Many of the  classical finiteness results in arithmetic geometry over number fields do not translate literally to the function field setting.  The aim of this section is to address some of the subtleties inherent to the function field setting and to establish notation.
%\subsection{Integral points on varieties over $\fqt$}

We say that a point on an affine variety $V/\fqt$ is an \emph{integral point},  if all of its coordinates lie on $\frt$. %In this situation a literal translation of Siegel's theorem does not hold. 
In contrast with Siegel's theorem over $\qq$,  there are elliptic curves over $\fqt$ with an infinite set of integral points. %we cannot expect  a literal translation of Siegel's theorem to hold over $\fqt$. %The example that will interest us will be the following

\begin{example}\label{ex:quad_twist}
Let $D=D(t)$ be a square-free polynomial over $\fq$ and $E_D:Dy^2=f(x)$ be an elliptic curve with $f(x)$  a cubic polynomial over $\ff_q$. If $(x,y)$ is an integral point on $E_D$ then 
\begin{equation}\label{eq:frobenius_orbit}
\{(x^{q^i}, D^{\frac{q^i-1}{2}}y^{q^i}): i \in \nn\} 
\end{equation}
 is an infinite set of integral points on $E_D$.
\end{example}

Such a phenomenon needs to be taken  into account  when con\-si\-de\-ring an analogue over $\fqt$ to the uniformity result given by the conclusion of Theorem \ref{the:uniformityofquadtwistsnumber}. 
To that end, we discuss some properties of isotrivial varieties over $\fqt$.% that allow us to  propose an analogue to Abramovich's uniformity results in the function field setting.% We start from the definition of isotriviality.
%Varieties defined over $\fqt$  come in two flavors: isotrivial and non-isotrivial.

\begin{definition}
%A variety $V$ defined over $K$ is said to be \emph{constant} if there exists a variety $V_0$ defined over $\fq$ such that $V\cong V_0\times_{\operatorname{Spec}\fq} \operatorname{Spec}{K}$. 
A variety $V$ defined over $\fqt$ is said to be \emph{isotrivial} if it is isomorphic to a constant variety after a finite base extension; that is, there exists a variety $V_0$ defined over $\fq$ such that $V\cong V_0\times_{\operatorname{Spec}\fqt} \operatorname{Spec}{K}$, for some  finite extension $K/\fqt$. In this case, we say that $V$ is based on $V_0$. A variety that is not isotrivial is called \emph{non-isotrivial}.
\end{definition}

%Isotrivial varieties possesses a very special endomorphism.

\begin{definition}
 Let $V$ be an isotrivial variety based on $V_0$. The \emph{Frobenius endomorphism} of $V$ is the map defined by
 $$
 \phi^{-1}\circ\fr\circ\phi:V\longrightarrow V,
 $$
 where $\phi:V\longrightarrow V_0$  is an isomorphism given by the isotriviality of $V$, and $\fr$ is the frobenius endomorphism of the constant variety $V_0$. 
 
 The set $\{\fr^i(P): i \in \nn\}$ is called the \emph{Frobenius orbit} of the point $P\in V$. 
\end{definition}
\begin{remark}\label{rm:frob_end}
\begin{enumerate}
 \item  Notice that the Frobenius endomorphism need not to be defined over $\fqt$ and is well defined up to conjugation by elements of $\operatorname{Aut}(V)$. 
% \item If $V/\fq(t)$ is an elliptic curve then $V$ is isotrivial if and only if its $j$-invariant is in $\fq$.
\item In  Example \ref{ex:quad_twist}, we see that the set defined by (\ref{eq:frobenius_orbit}) is the Frobenius orbit of the point $(x,y)$ on $E_D$. 
\item We note that if $P$ is an integral point on an iso\-tri\-vial elliptic curve $E$, then $\fr(P)$ does not need to be an integral point. In fact,  let  $D=D(t)$ be a polynomial over $\fq$ and let $E/\fqt$ be the elliptic curve defined by $y^2=x^3+D$. Then $E$ is an isotrivial elliptic curve based on the elliptic curve defined over $\fq$ by $y^2=x^3+1$. It is not hard to see that the Frobenius endomorphism of $E$ is  given by $\fr(x,y)=(x^qD^{{(-q+1)}/{3}},y^qD^{{(-q+1)}/{2}})$. 
\end{enumerate}
\end{remark}

The Frobenius orbit of an integral point may or may not contain an infinite number of integral points. 
In any case,  it is true that in any isotrivial elliptic curve the number of Frobenius orbits of an integral point is finite  (see for instance section 7.3 of \cite{Mason_diophantine}). The following definition provides a more concise way of stating this finiteness result.
\begin{definition}
A point $P$ on a variety $V/\fqt$ is said to be \emph{separable} if it is not contained in the Frobenius orbit of any other point of $V$.
\end{definition}
\begin{remark}
 Notice that if $V$ is non-isotrivial then every point is a separable  point.
\end{remark}
\begin{example}\label{ex:sep_point_quad_twist}
Assume the hypotheses of Example \ref{ex:quad_twist}. Suppose $P=(x,y)$ is a point on the quadratic twist $Dy^2=f(x)$. Then $P$ is separable if, and only if, $x'\neq 0$.
\end{example}
Next we provide a simple proof of the finiteness of the number of separable integral points on  quadratic twists of constant elliptic curves. Such a result shows that it is appropriate to  ask whether the number of separable integral points on a family of isotrivial quadratic twists  can be bounded uniformly.

\begin{theorem}%[Siegel's Theorem for isotrivial quadratic twists]
\label{the:siegel_quad_twist}
Let $D=D(t)$ and $f=f(x)$ be square free polynomials defined over  $\fq$. Suppose $\deg f=3$ and  $(F,G)$ is a separable integral point on $Dy^2=f(x)$ over $\fqt$. Then $G$ divides $F'$ and $\deg D/3\leq \deg F < \deg D-1$.

In particular, the set of separable integral points on $Dy^2=f(x)$ is finite.
\end{theorem}
%\begin{proof}
%See for instance section 7.3 of \cite{Mason_diophantine}, or Lemma \ref{gdf}, for the special case of quadratic twists of constant elliptic curves.
%\end{proof}

%Let $A(t)$ be a square free polynomial of odd degree $d>1$ and let $E:y^2=f(x)$ be an elliptic curve, both defined over a finite field of order $q$. We will denote by $E_A:A(t)y^2=f(x)$ the quadratic twist of $E$ by $A(t)$, which we view as an elliptic curve defined over $\fqt$.  

\begin{proof}
Since $(F,G)=(F(t),G(t))$ is a separable point, Example \ref{ex:sep_point_quad_twist} shows that $F'\neq 0$. This integral point  induces an identity on $\frt$:
\begin{equation}\label{eq1}
D(t)G(t)^2 = f(F(t)).
\end{equation}

By equating degrees in this identity we obtain the lower bound $\deg D \leq 3\deg F$. By differentiating equation (\ref{eq1}) we are led to
\begin{equation}\label{eq2}
D'(t)G(t)^2 + 2D(t)G(t)G'(t) = F'(t)f'(F(t)).
\end{equation}

Let $\beta$ be a root of $G(t)$ of multiplicity $r$. By \eqref{eq1}, we have that $(t-\beta)^r$ divides $f(F(t))$, and by (\ref{eq2}), we conclude that $(t-\beta)^r$ divides $F'(t)f'(F(t))$. Notice that $(t-\beta,f'(F(t)))=1$, since $f(x)$ has no repeated roots. Hence $(t-\beta)^r$ divides $F'(t)$ and, as a consequence, $G(t)$ divides $F'(t)$. Therefore $\deg G \leq \deg F -1$. After equating degrees in (\ref{eq1}) and using the previous inequality, we obtain the desired upper bound on $\deg F$.
\end{proof}
%\begin{remark}
%Note that the argument also holds for square-free polynomials $f(x)$ with $\deg f\geq 3$.
%
%\end{remark}

% As  discussed in the introduction, our results in the following sections show that this is not the case; not even if we consider  other isotrivial twists of elliptic curves or non-isotrivial elliptic curves.

\section{Obtaining multisections on certain Elliptic surfaces}\label{sec:delsarte}

%Before we show how one can construct \emph{many} integral points on an elliptic curve, in the next section we provide a procedure that in many cases allow us to construct a single point on certain elliptic curves.

Our main objective is to construct elliptic curves containing an arbitrary number of separable integral points. %In the case of rational points on an elliptic curve $E$ over $\fqt$,  the maximal number of linearly independent points is (conjecturally) linked to two analytic functions: the $\zeta$-function of a hyperelliptic curve, in the case of isotrivial elliptic curves (see \cite{tate_rank_1967}, \cite{elkies_mordell-weil_1994}, \cite{bouw_ordinary_2004},  \cite{diem_ordinary_2007}); and the $L$-function of $E$, in case $E$ is non-isotrivial (see \cite{ulmer_elliptic_2002}, \cite{ulmer_l_2007}, \cite{berger_towers_2008}). These analytic functions have been used several times (see  above references) to prove the unboundedness of the rank of elliptic curves over function fields. 
%Unlike the  case of rational points, to obtain a large set of  integral points  on elliptic curves one needs to exhibit them explicitly. %But before we show how one can construct \emph{many} explicit integral points on an elliptic curve, we show how to construct a \emph{single} integral point on an elliptic curve. 
We show in this section that an adaptation of a procedure due to  T. Shioda can be used as a first step towards this purpose.
%, since it provides a way to construct  explicit sections on certain elliptic surfaces over $\pp^1$. These sections corresponds to rational points on an elliptic curve over $k(\pp^1)$: the generic fiber of such an elliptic surface. % It turns out that some of these rational points are the integral points we seek.%We then use the well-known isomorphism between the group of sections of an elliptic fibration and the group of rational points of its generic fiber, to obtain an explicit set of integral points on elliptic curves.

%This construction has been used by Ulmer \cite{ulmer_elliptic_2002,ulmer_l_2007} to show that the BSD conjecture holds for the Jacobian variety of some curves and also by Shioda \cite{shioda_mordell-weil_1991} to construct lattices with dense sphere packing in certain dimensions.

Let $k$ be any field. In  \cite[Section 2]{shioda_explicit_1986}, Shioda considers surfaces in $\pp^3_k$ that are defined  by four monomials
\begin{eqnarray}
\X_A=\X_A(c_0,c_1,c_2,c_3):\sum_{i=0}^3 c_iX_0^{a_{i0}}X_1^{a_{i1}}X_2^{a_{i2}}X_3^{a_{i3}}=0,
\end{eqnarray}
where $A=(a_{ij})_{0\leq i,j\leq 3}$ is a $4 \times 4$ matrix with non-negative integral coefficients. Shioda calls  $\X_A$ a \emph{Delsarte surface} with matrix $A$, when it satisfies 

\begin{itemize}
\item $\det A \neq 0 \text{ in } k$;
\item $\displaystyle\sum_{j=0}^3 a_{ij}  \text{ is independent of } i = 0,1,2,3$;

\item for any $j$, some $a_{ij}=0$.
\end{itemize}

Notice that the surface defined by 
$$
c_0X_0^d+c_1X_1^d+c_2X_2^d+c_3X_3^d=0
$$
is a Delsarte surface with matrix $dI_4$, where $I_4$ is the identity matrix of dimension 4. In this case we will denote $\X_{dI_4}$ simply by $\fc_d$ and we will refer to it as a \emph{Fermat surface of degree $d$}.

For any  Delsarte surface $\X_A=\X_A(c_0,c_1,c_2,c_3)$, $A^{-1}$ is a $4 \times 4$ matrix with rational entries. If $\X_C=\X_C(c_0,c_1,c_2,c_3)$ is another Delsarte surface then there exists an integer $d$ such that $B=dA^{-1}C=(b_{ij})_{0
\leq i,j \leq 3}$ has integer entries. This easily implies the existence of a dominant rational map $ \X_{dC}  \dashedrightarrow \X_A$ defined by
$$
(X_0,X_1,X_2,X_3) \longmapsto \left(\prod_{j=0}^3X_j^{b_{0j}}, \prod_{j=0}^3X_j^{b_{1j}}, \prod_{j=0}^3X_j^{b_{2j}}, \prod_{j=0}^3X_j^{b_{3j}}\right).
$$
In particular, for any Delsarte surface $\X_A$ there exists an integer $d$, and a dominant rational map $\fc_d \dashedrightarrow \X_A$ from a Fermat surface of degree $d$.

In what follows we deal with surfaces  which are not necessarily Delsarte surfaces. Nonetheless, we can associate to them a matrix of exponents and use the above  procedure to construct explicit dominant rational maps among them. Ultimately we use these rational maps  to construct non-constant sections on certain elliptic surfaces with a fibration over $\pp^1_k$.  But before we present these examples, we  recall  briefly some facts about elliptic surfaces that will be used in our discussion.

Let $K=k(t)$ be a rational function field and $E/K$ be an elliptic curve. Associated to $E$ is a fibered elliptic surface $\pi:\ee\longrightarrow \pp_k^1$, as described in  \cite[Lecture 3]{ulmer_park_city_lect_2011}. A \emph{section} of $\ee$ is a $k$-rational morphism $P:\pp_k^1\longrightarrow \ee$ such that $ \pi\circ P: \pp_k^1 \longrightarrow \pp_k^1$ is the identity map. The set of sections of $\ee$ is in bijection  with the set of $k(t)$-rational points on $E$. 

A \emph{multisection} ${M}$ of an elliptic fibration  $\pi: \ee\longrightarrow \pp_k^1$ is an irreducible subvariety $M\subset\ee$ such that the projection map $\pi: M \longrightarrow \pp_k^1$ has non-zero degree. After a suitable finite base extension $C\longrightarrow \pp_k^1$, we have that $M_C:=M\times_{\pp_k^1} C$ is a section of $\ee\times_{\pp_k^1} C\longrightarrow C$. Notice that $M_C$ corresponds to a rational point on the base extension $E\times_{\Spec(K)} \Spec(k(C))$.

Next we apply the above discussion to several examples. For all of these examples we assume that:
\begin{itemize}
 \item  all varieties are defined over a field $k$ with $\operatorname{char}(k)\neq2, 3$; 
  \item $x,y$ and $z$ are  coordinates of $\pp^2_k$ and $t$ is the coordinate of $\af^1_k$;

 \item $d$, $r$ and $s$ are  integers satisfying $d,s>1$ and  $r=d/s$;
 \item $\fc_d$ is the Fermat surface in $\pp^3_k$ defined by $ X_0^{d}-X_1^{d} =X_2^{d}-X_3^{d}$; and
\item  for $1\leq i,j\leq 4$, $\ell_{ij}$ is the line  on $\fc_d$ of the form $\ell_{12}=(t,t,1,1)$, where  the indexes indicate the position of $t$ in the 4-tuple.
\end{itemize}

%\begin{definition}
%Let $\ee$ be a smooth projective algebraic surface defined over a field $k$. An \emph{elliptic fibration} is a $k$-morphism $\pi: \ee\longrightarrow C$ onto a smooth projective irreducible curve $C/k$  with generic fiber a smooth curve $E$ of genus 1. Additionally, we require that the fibration $\pi$ contains at least one \emph{section}, that is, a $k$-rational map $O:C\longrightarrow \ee$ such that $ \pi\circ O: C \longrightarrow C$ is the identity map. A \emph{multisection} $\mathcal{M}$ of an elliptic fibration  $\pi: \ee\longrightarrow C$ is an irreducible subvariety $\mathcal{M}\subset\ee$ such that the projection map $\pi: \mathcal{M} \longrightarrow C$ has non-zero degree.
%\end{definition}

\begin{example}\label{lem2}

Let  $U_{1}$ be the closed subset  of  $\pp^2\times \af^1$ defined by the equation $(t^{d}-t)y^2z=x^3-xz^2$.  We associate to $U_1$ a matrix of exponents
$$
\left( \begin{array}{cccc}
0 & 2 & 1 & d\\
0 & 2& 1& 1\\
3 & 0&  0& 0\\
1& 0 & 2 & 0
\end{array}\right).
$$
Let $e=d-1$. By  following  Shioda's procedure we obtain a rational map $\pi:\fc_{2(d-1)} \dashedrightarrow U_1$ given by
$$
[X_0:X_1:X_2:X_3] \longmapsto \left(\left[X_0X_2^{e}X_3^{e/{2}}: X_1^dX_2^{e/{2}}: X_0X_3^{3e/2}\right], {X_0^2}/{X_1^2}\right).
$$

Let $\ee_1$ be the elliptic surface associated to (the generic fiber of) $U_1$. The image $\pi(\ell_{13})$ is the curve $([t^{d-1}:t^{{(d-3)}/{2}}:1],t^2)$ on $U_1$. Its closure on $\ee_1$  is a multisection of the fibration $\ee_1\longrightarrow\pp_k^1$. 

Suppose $d\equiv 3\mod 4$ and let $C_1$ be the projective curve  given by $u=t^2$. Thus the multisection above gives rise to a section on $\ee_1\times_{\pp_k^1} C_1$: namely, the closure on $\ee_1\times_{\pp_k^1} C_1$ of the curve $([u^{{(d-1)}/{2}}:u^{{(d-3)}/{4}}:1],u)$. Notice that this section corresponds to the integral point $(u^{{(d-1)}/{2}},u^{{(d-3)}/{4}})$ on the curve $(u^{d}-u)y^2=x^3-x$ defined over $k(u)$.
\end{example}

\begin{example}\label{3td}
Let $U_2\subset \pp^2\times \af^1$ be the closed set defined by the equation $y^2z-yz^2=(t^d-t)x^3$.  Let $e=d-1$. In this case, we obtain a rational map $\fc_{2e}\dashedrightarrow U_2$ given by
$$
[X_0:X_1:X_2:X_3] \longmapsto \left(\left[X_0^{e}X_1^{e}X_3^d: 
X_0^{3e}X_2: X_1^{3e}X_2\right], {X_2^3}/{X_3^3}\right).
$$

Let $\ee_2$ be the elliptic surface associated to $U_2$. Under the above rational map, the image of the lines $\ell_{12}$ and $\ell_{34}$ do not yield any non-constant curve on $U_2$. On the other hand,  the lines $\ell_{13}$ and $\ell_{24}$ yield the same multisection of $\ee_2$: the closure on $\ee_2$ of the curve $([t^{d-2}:t^{3(d-1)}:1],t^3)$. 

 Let $C_2$ be the projective curve  given by $u=t^3$. If $d\equiv 2\mod 3$, the closure on  $\ee_2\times_{\pp_k^1} C_2$ of the curve $([u^{{(d-2)}/{3}}:u^{d-1}:1],u)$ is a section. %If $d\not\equiv 2\mod 3$ we could try to look at the trace of this multisection, but this turns out to be trivial.
\end{example}

\begin{example}\label{3tdm1}
Consider the hypersurface $\mathcal{X}_D:X_0^s+X_1^s-X_2^d-X_3^d=0$ in the weighted projective space $\pp(r,r,1,1)$. 
Let $U_{3}\subset \pp^2\times \af^1$ be the closed set defined by $y^2z=x^3+(t^d+1)z^3$ and let $B$ be the matrix
$$
 \left(\begin{array}{rrrr}
0 & 2 & 1 & 0 \\
3 & 0 & 0 & 0 \\
0 & 0 & 3 & d \\
0 & 0 & 3 & 0
\end{array}\right)^{-1}
\left(\begin{array}{rrrr}
s & 0 & 0 & 0 \\
0 & s & 0 & 0 \\
0 & 0 & d & 0 \\
0 & 0 & 0 & d
\end{array}\right).
$$
If $6\mid s$ and (consequently) $6\mid d$ then associated to $B$ is the rational map $\mathcal{X}_{D}\dashedrightarrow U_{3}$ given by
\begin{equation}\label{cubic_rationalmapshioda}
[X_0:X_1:X_2:X_3] \longmapsto \left(\left[-X_1^{s/3}X_3^{d/6}:X_0^{s/2}:X_3^{d/2}\right],{X_2}/{X_3}\right).
\end{equation}

%Suppose $r=d/s$ is an integer. 
The image of the curve $(u^r,1,u,1)\subset \mathcal{X}_{D}$ under this rational map is the curve $([-1:u^{{d}/{2}}:1],u)$ on $U_3$. 

Another  rational curve on $\mathcal{X}_{D}$ is $(1,u^r,u,1)$. Associated to it is the curve $([-u^{{d}/{3}}:1:1],u)$ on $U_3$. %Again, the trace of the multisection is trivial if $d\not\equiv 0\mod 3$. 
The other two trivial  curves on $\mathcal{X}_{D}$, $(u^r,1,1,u)$ and $(1,u^r,u,1)$, do not yield  curves on $U_3$ different from the ones above.
\end{example}
\begin{example}\label{4tdm1}
Let $D$ and $\X_D$ be as in the previous example. Let $U_{4}\subset \pp^2\times \af^1$ be the closed set defined by  $y^2z=x^3-(t^d+1)xz^2$. If we assume that $4\mid s$, then the map
\begin{equation}\label{quad_rationalmapshioda}
[X_0:X_1:X_2:X_3] \longmapsto \left(\left[-X_1^{s/2}X_3^{d/{4}}:X_0^{s/2}X_1^{s/{4}}:X_3^{3d/{4}} \right],{X_2}/{X_3}\right)
\end{equation}
is a rational map   $\X_D\dashedrightarrow U_{4}$. 

Following the work done in the previous examples, we obtain the curves  $([-1:u^{{d}/{2}}:1],u)$ and $([-u^{{d}/{2}}:u^{{d}/{4}}:1],u)$ on $U_4$.
\end{example}

%The next two example shows that this procedure sometimes cannot be used to construct sections.

%\begin{example}
%For the elliptic surface $\ee_5:zy^2=x^3+(t^d-t)$, the procedure generates the rational map
%$$
%(X_0,X_1,X_2,X_3) \longmapsto \left(\left[X_1^{2(d-1)}X_2^2X_3^d:X_0^{2(d-1)}X_1^{d-1}X_2^3:X_3^{3d}\right],
%\frac{X_2^4}{X_3^4}\right),
%$$
%from $\fc_{4(d-1)}$ to $\ee_5$. The image of the line $\ell_{13}$ through this map  yields the multisection $([t^2:t^{2d+1}:1],t^4)$. Unlike the previous cases we cannot associate to this multisection a section on $\ee_5$: the  number $2d+1$ is not divisible by 4.% Also the trace is trivial in the interesting cases $d\equiv 1,3\mod 4$.
%\end{example}
%\begin{example}\label{lastex}
%Consider $\ee_6:(t^d-t)y^2=x^3-1$. As in the previous example, the construction using the trivial lines on $\fc_{6(d-1)}$ does not produce any polynomial sections on $\ee_6$. Indeed,  the rational map from the Fermat surface $\fc_{6(d-1)}$ to $\ee_6$ is
%$$
%(X_0,X_1,X_2,X_3) \longmapsto \left(\left[X_0^3X_2^{2(d-1)}X_3^{d-1}:X_1^{3d}:X_0^3X_3^{3(q-1)}\right],
%\frac{X_0^6}{X_1^6}\right),
%$$
%and the image of the trivial lines on $\fc_{6(d-1)}$ yields only the multisection  $([t^{2(d-1)}:t^{-3}:1],t^6)$.
%\end{example}

Our construction can also be used to explain the  explicit points  on non-isotrivial elliptic curves recently found by Ulmer \cite{ulmer_explicit_legendre_2012,ulmer_mordell_jacobians_2012}.
\begin{example}\label{ex:legendre}
%Let  $d=p^f+1$. 
For an integer $f\geq 0$, one can easily verify (see \cite[Section 3]{ulmer_explicit_legendre_2012}) that the Legendre elliptic curve $y^2=x(x+1)(x+t^{p^f+1})$ over $\fqt$ contains the integral point $P=(t,t(t+1)^{(p^f+1)/2})$ . 

To show how our procedure can be used to construct this point, first consider the closed set $U_5$ of $ \pp^2 \times \af^2$ defined by
$$
zy^2=x^3+zv^dx^2+zx^2+u^dxz^2.
$$
Notice that by setting $v=u$ in the defining equation of $U_5$, we  recover the surface $E:zy^2=x(x+z)(x+zu^d)$ in $\pp^2\times\af^1$ associated to the Legendre elliptic curve.

$U_5$ determines a matrix of exponents
$$
\left( \begin{array}{ccccc}
0  &     2 &   1    &     0 & 0\\
3     &    0  & 0     &        0& 0\\
2      &   0  & 1 & d     &        0\\
   2         &   0  &  1 &        0          & 0\\
1 & 0 & 2 & 0 & d
\end{array}\right)
$$
which, following Shioda, can be used to obtain a dominant rational map from the Fermat hypersurface $X_0^d-X_1^d-X_2^d-X_3^d-X_4^d=0$ to $U_5$. This map is given by:
$$
[X_0:X_1:X_2:X_3:X_4]\longmapsto\left(\left[X_1^dX_3^{d/2}:X_0^{d/2}X_1^d:X_3^{3d/2}\right],\left(\frac{X_2}{X_3},\frac{X_1X_4}{X_3^2}\right)\right).
$$

If we  set $u=X_2/X_3=(X_1X_4)/X_3^2=v$, we conclude that 
$$
[X_0:X_1:X_2:X_3:X_4]\longmapsto \left(\left[X_1^dX_3^{d/2}:X_0^{d/2}X_1^d:X_3^{3d/2}\right],{X_2}/{X_3}\right)
$$
defines a dominant rational map $\phi:\mathcal{S}_2\dashedrightarrow E$, where $\mathcal{S}_2$ is the surface in $\pp^4$ defined by
$$
\left\{\begin{array}{l}
X_0^d-X_1^d-X_2^d-X_3^d-X_4^d=0\\
X_1X_4=X_2X_3
\end{array}\right..
$$

When $d=p^f+1$, the surface $\mathcal{S}_2$ contains the curve $\ell:(u^d+1,u,u^d,1,u^{p^f})$. The image $\phi(\ell)$ is the curve $([u^d,u^d(u^d+1)^{d/2},1],u^d)$ on $U_5$. If $\ee_5\longrightarrow \pp^1$ is the elliptic surface associated to $E/\fqt$, then $\phi(\ell)$ corresponds to a section on $\ee_5\times_{\pp^1} C$, where $C$ is the projective curve defined by  $t=u^d$. Finally, this section yields the rational point $P$ given above.

\end{example}

%\begin{remark}
%Except for the last example, the generic fiber of the given elliptic surface is an isotrivial elliptic curve of $j$-invariant equal to $0$ or $1728$ twisted by a polynomial with two monomials.  Up to $\bar{k}$-isomorphisms, these examples represent all the isotrivial elliptic curves defined over $k(t)$ that can be written with just four monomials. Indeed, if $E$ is an isotrivial elliptic curve with  $j(E)\neq 0,1728$, and $\operatorname{char}(k)\neq 2,3$, then necessarily $E$ is a quadratic twist of a constant elliptic curve by a rational function. Thus it can be written as $A(t)y^2=x^3+ax+b$, where $a,b\in k\backslash \{0\}$ and $A(t)$ a squarefree polynomial over $k$. The later equation can be written with just four monomials only if $A(t)=t$, which is not an interesting case because this elliptic curve has rank 0 over $k(t)$. 
%\end{remark}

\begin{remark}
Let $M$ be a multisection  of an elliptic surface $\ee\longrightarrow C$, and let $C_0\longrightarrow C$ be a base extension such that $M_0:=M\times_{C} C_0$  is a section of $ \ee\times_{C} C_0$. We  obtain a section of  $\ee\longrightarrow C$ by consi\-dering  the trace $\sum_{\sigma} (M_0)^\sigma$, where the summation is over the elements of $ \operatorname{Gal}(k(C_0)/k(C))$. In all of the previous examples, the trace of the  multisections does not yield any new information: a multisection will either be traced down to the zero section or to an integer multiple of the original section.
%Let us return to the above example. Later, we will take $d$ to be a power of a prime, so it is natural to consider the cases where $d\equiv 1\mod 4$ and $d\equiv 3\mod 4$. When $d\equiv 1\mod 4$ the trace of the multisection $([t^{d-1}:t^{\frac{d-3}{2}}:1],t^2)$ is the 0-section. When $d\equiv 3\mod 4$, we will have the rational point $P=(t^{\frac{d-1}{2}},t^{\frac{d-3}{4}})$ on the elliptic curve $(t^{d}-t)y^2=x^3-x$ and the trace will simply be $4P$.
\end{remark}

%\begin{remark}
% One should notice that our construction is very general:  if instead of integral points, one is interested in constructing elliptic curves with a rational point, then our procedure can be used to construct many different examples of such elliptic curves. 
%\end{remark}

%\section{Elliptic curves with a large set of  integral points} \label{sec:manyintegralpoints}
\section{Isotrivial elliptic curves with a large set of  integral points}\label{sec:manyintegralpoints}

 In this and the following section we consider the generic fiber of the elliptic surfaces studied in Examples  \ref{lem2} through \ref{ex:legendre} and show that  over $\ff_q(t)$ these elliptic curves may have an arbitrarily large set of separable integral points. In the isotrivial case we achieve this result in  two distinct ways, which we now proceed to explain.%We achieve this in two different ways: for elliptic curves twisted by $t^q-t$, we will use the fact that additive polynomials defined over $\ff_q$ form a ring under composition isomorphic to $\ff_q[t]$. The elliptic curves twisted by $t^{q+1}+1$ are covered by the Hermitian Fermat surface, which possesses a large automorphism group $PU(4,q)$, the projective unitary group of dimension four. 

%\subsection{Quadratic and cubic twists}
%\subsection{A construction related to Artin-Schreier extensions of function fields}
\subsection{A construction related to additive polynomials}
%In this section we  focus on the elliptic curves $ (t^d-t)y^2=x^3-x$  and $y^2-y=(t^d-t)x^3$. 
 Example \ref{lem2}  shows that over $k(t)$ the elliptic curve defined by $(t^{d}-t)y^2=x^3-x$  contains the integral point
\begin{equation}\label{eq:pointquad}
 (t^{(d-1){/2}},t^{({d-3}){/4}}), \text{ when }d\equiv 3\mod 4,
\end{equation}
 and Example \ref{3td} proves that $y^2-y=(t^{d}-t)x^3$ contains the point
\begin{equation}\label{eq:pointcubic}
 (t^{{(d-2)}/{3}},t^{d-1}), \text{ when $d\equiv 2\mod 3$}.
\end{equation}

The existence of  both of these points can be interpreted in the following way. Consider the elliptic curve $E$ defined over the function field $k(u)$ by $uy^2=x^3-x$. When $d\equiv 3\mod 4$, the point defined by \eqref{eq:pointquad} can be seen as a point on the base extension $E\times_{\Spec(k(u))} \Spec k(t)$, where $k(t)/k(u)$ is the field extension defined by $u=t^{d}-t$. By assuming this point of view, we will be able to construct extra integral points on the elliptic curves $ (t^{q^n}-t)y^2=x^3-x$  and $y^2-y=(t^{q^n}-t)x^3$ defined over $\fqt$. The points we construct are defined over certain extensions  $\fq(t)/\fq(u)$ where $u$ is transcendental over $\fq$ and $t$ satisfies $u=B(t)$, with $B(t)$ an $\fq$-additive polynomial. For this reason we present some  facts about additive polynomials that will be needed shortly. We start with their definition.

\begin{definition}
An $\ff_q$-\emph{additive polynomial} $A(t)$ is a polynomial in $\ff_q[t]$ of the form 
$$
A(t)=\sum_{i=0}^n a_it^{q^i}.
$$
We will denote the set of all $\ff_q$-additive polynomials by $\ff_q[{\bf F}]$.
\end{definition}

An additive polynomial can be seen as an $\ff_q$-polynomial  in the indeterminate ${\bf F}$, the $q$-Frobenius map $t\longmapsto t^q$. Indeed, start by defining $\fr^0(t):=t$. We have that the $i$-th self composition of $\fr$ is the polynomial $\fr^i(t)=t^{q^i}$, and so an additive polynomial $A(t)$ is the same as a $\fq$-linear combination of powers of Frobenius, $A_0(\fr)=\sum_{i=0}^n a_i\fr^i$. It turns out that the set $\ff_q[{\bf F}]$ has the structure of a ring isomorphic to $\fq[X]$. %Let us recall some trivial facts about rings of prime characteristic $p$ and the Frobenius map.

%\begin{lemma}
%Let $\alpha\in\ff_q$ and $i,j$ be nonnegative integers. Then
%\begin{enumerate}
%\item $\fr^i(\alpha\fr^j)=\alpha\fr^{i+j}$
%\item Let $R$ be a commutative ring of characteristic $p$. Then for any  $x,y\in R$, we have $\fr^i(x+y)=\fr^i(x)+\fr^i(y)$.
%\end{enumerate}
%\end{lemma}
%\begin{proof}
%This follows easily from  the identity $(x+y)^p=x^p+y^p$, true in any commutative ring of characteristic $p$.
%\end{proof}

\begin{lemma}\label{cor:additivepol}
Let $A(t),B(t)\in\ff_q[\fr]$ and $\alpha\in\ff_q$. Then:
\begin{enumerate}
\item[(1)] $A(t)+B(t)\in\ff_q[\fr]$;
\item[(2)] $\alpha A(t)\in\ff_q[\fr]$;
\item[(3)] $ A(B(t))\in\ff_q[\fr]$.
\end{enumerate}

Furthermore, $\ff_q[\fr]$ can be endowed with a ring structure with multiplication defined by $A(t)\circ B(t):=A(B(t))$ and the association 
$$
P(X)=\sum_{i=0}^n a_iX^i \longmapsto P(\fr):=\sum_{i=0}^n a_i\fr^i
$$
is a ring isomorphism between $\ff_q[X]$ and $\ff_q[\fr]$.
\end{lemma}
\begin{proof}
(1) and (2) are trivial. To prove (3), write 
$$
A(t)=\sum_{i=0}^n a_i\fr^{i}=A_0(\fr) \text{ and } B(t)=\sum_{j=0}^m b_j\fr^{j}=B_0(\fr)
$$

From  the identity 
\begin{equation}\label{eq:identity_car_p}
 (x+y)^p=x^p+y^p,
\end{equation}
 true in any commutative ring of characteristic $p$, it follows that:
\begin{eqnarray*}
A(B(t)) & = & \displaystyle\sum_{i=0}^n a_i\fr^{i}\left(\sum_{j=0}^m b_j\fr^{j}\right)=\sum_{i=0}^n a_i\left(\sum_{j=0}^m\fr^{i} (b_j\fr^{j})\right)\\
& = &\displaystyle \sum_{i=0}^n a_i\left(\sum_{j=0}^m b_j\fr^{i+j} \right)=
\displaystyle \sum_{i=0}^n\sum_{j=0}^m  a_ib_j\fr^{i+j} \in\ff_q[\fr].
\end{eqnarray*}
Thus (3) is proved. Associativity is inherited  from $\ff_q[t]$ and the distribution law follows from \eqref{eq:identity_car_p}. So $\ff_q[\fr]$ is a ring. Notice that the latter equation also proves that $A_0(\fr)\circ B_0(\fr) = (A_0B_0)(\fr)$, and so the map $P(X)\longmapsto P(\fr)$ is multiplicative. Hence it is a ring isomorphism, since it is clearly an additive bijective map. 
\end{proof}

%In this section, we will construct quadratic twists of $y^2=x^3-x$ with many points with polynomial coordinates defined over $\ff_q$, for $q\equiv 3 \mod 4$. Although, most of what we do is also true over fields of characteristic 0, the next lemma is a key point on the explicit construction of curves with many polynomial points and it is true only in a field of positive characteristic:

We are now ready to prove one of our main results  which is a generalization of Theorem \ref{the:maintheorem}:

\begin{theorem}\label{the:pmod4}
Let $m_1, m_2,\ldots,m_l$ be distinct odd positive integers. Suppose $A(t)=A_0(\fr)$ is an $\fq$-additive squarefree polynomial such that $X^{m_i}-1$ divides $A_0(X)$, for all $1\leq i\leq l$. 

Then the following elliptic curves contain at least $l$ separable integral points over $\fqt$.
\begin{enumerate}
 \item  $A(t)y^2=x^3-x$, when  $q\equiv 3\mod 4$.
 \item   $y^2-y=A(t)x^3$, when $q\equiv 2\mod 3$.
\end{enumerate}

\end{theorem}
\begin{proof}
To prove part (1), notice that under the hypothesis of the theorem we  have  $q^{m_i}\equiv 3\mod 4$. Therefore, if in \eqref{eq:pointquad} we take $d=q^{m_i}$ and $k=\fq$,  we obtain the point  
$$
P_i(u)=(u^{{(q^{m_i}-1)}/{2}},u^{{(q^{m_i}-3)}/{4}})
$$
on the twist $(u^{q^{m_i}}-u)y^2=x^3-x$ defined over $\fq(u)$.

By assumption, for each $1\leq i\leq l$ there exists a polynomial $B_i(X)\in\fq[X]$ such that $A_0(X)=(X^{m_i}-1)B_i(X)$. Under the isomorphism defined in Lemma \ref{cor:additivepol}, this equation gives us an identity:
\begin{equation}\label{eq:identity}
A(t)=B_i(\fr)^{q^{m_i}}-B_i(\fr).
\end{equation}

Let $K=\fq(t)$ be the extension of $\fq(u)$ defined by $u=B_i(\fr)$. The lift of the point $P_i$ to $K$,
\begin{equation}\label{eq:points_on_twist}
Q_i =P_i(B_i(\fr))=(B_i(\fr)^{{(q^{m_i}-1)}/{2}},B_i(\fr)^{{(q^{m_i}-3)}/{4}}),
\end{equation}
is a point on the twist  $A(t)y^2=x^3-x.$

If we let $\deg A(t)=q^a$, then the degree of the first coordinate of $Q_i$ is ${(q^a-q^{a-m_i})}/{2}$. So $Q_i$ and $Q_j$ are distinct points for  $i\neq j$. Since $A(t)$ is squarefree, we have $(B_i(\fr)^{{(q^{m_i}-1)}/{2}})'\neq 0$. Thus  it follows from example \ref{ex:sep_point_quad_twist} that  $Q_i$ is a separable integral point for all $1\leq i\leq l$.

The second part is proved in a similar way. The hypothesis of the theorem and \eqref{eq:pointcubic} give us the polynomial point $(u^{{(q^{m_i}-2)}/{3}},u^{q^{m_i}-1})$ on the elliptic curve $y^2-y=(u^{q^{m_i}}-u)x^3$. The identity \eqref{eq:identity}  implies that the integral point $S_i=(B_i(\fr)^{{(q^{m_i}-2)}/{3}},B_i(\fr)^{q^{m_i}-1})$ is on the curve $y^2-y=A(t)x^3$ defined over $K$. Observe that the degree of the second coordinate of $S_i$ is $q^a-q^{a-m_i}$, which shows  that the  $S_i$'s are all distinct.

The Frobenius orbit of a rational point $(x,y)$ on $y^2-y=A(t)x^3$ is $\{(A(t)^{{(q^{i}-1)}/{3}}x^{q^{i}},y^{q^{i}}): i \in \mathbb{N}  \}$. Thus  a point $(x_0,y_0)$ on $y^2-y=A(t)x^3$ is a separable point if $y_0'\neq 0$ . Since $(B_i(\fr)^{q^{m_i}-1})'\neq 0$, we conclude that $S_i$ is a separable integral point for all $i$.
\end{proof}

Our  main example of elliptic curves with an unbounded number of {sepa\-rable} integral points is  now an easy consequence of this result.
 
\begin{corollary}\label{cor:maintheorem}
Let $n$ be a positive integer. Then over $\fq(t)$ the following elliptic curves have  a separable integral point for every odd divisor of $n$: 
\begin{enumerate}
 \item $(t^{q^n}-t)y^2=x^3-x$, when $q\equiv 3\mod 4$.
 \item $y^2-y=(t^{q^n}-t)x^3$, when $q\equiv 2\mod 3$.
\end{enumerate}
\end{corollary}
\begin{proof}
Under the isomorphism defined in Lemma \ref{cor:additivepol}, the $\fq$-additive polynomial $A(t)=t^{q^n}-t$ corresponds to the polynomial $A_0(X)=X^n-1$. It is well known that $X^m-1$ divides $A_0(X)$ if and only if $m$ divides $n$. Therefore, in the above theorem, we can take $m_i$ to be the odd divisors of $n$.
%In this particular case,  $B_i(\fr)$ is given by the \emph{trace polynomial from $q^n$ to $q^k$}:
%$$
%T^n_k(t)=\displaystyle \sum_{i=0}^{s-1} t^{q^{ki}}.
%%\qedhere
%$$
\end{proof}

\subsection{A construction related to a surface containing many rational curves}
%\subsection{Cubic and quartic twists of elliptic curves with a large set of integral points}
%\subsection{A construction related to Kummer extensions of function fields}

%Theorem \ref{the:maintheorem} can be interpreted as showing that the family of quartic twists  $ y^2=x^3-(t^{q^n}-t)^2x$ has at least as many separable integral points as the number of odd divisors of $n$, whenever $q\equiv 3 \mod 4$. When $q\equiv 2\mod 3$, the same is true for the cubic twists $y^2-y=(t^{q^n}-t)x^3$.  Notice that the congruence conditions on $q$ is equivalent to the supersingularity of the curves  considered. 

We now provide a  different construction of twists of elliptic curves with a large set of separable integral points. These elliptic curves are given by the generic fiber of the elliptic surfaces discussed in Examples \ref{3tdm1} and \ref{4tdm1}. Recall that in these examples, for  integers $d,s>1$  and $r$ satisfying  $r=d/s$, we defined the surface 
\begin{equation}\label{eq:surf_s}
 \mathcal{S}:X_0^{s}+X_1^{s}=X_2^{d}+X_3^{d},
\end{equation}
  in the weighted projective space $\pp_k(r,r,1,1)$. This surface contains the curves  $(t^r,1,t,1)$ and $(1,t^r,t,1)$ which, following the procedure described in Section \ref{sec:delsarte}, yields integral points on the curves $y^2=x^3+t^d+1$ and $y^2=x^3-(t^d+1)x$, when $6\mid s$ and $4\mid s$ respectively. The following sequence of results is used to prove that  $\mathcal{S}$ will contain many other  rational curves when we   specialize to the case where $s=q^m+1$, $d=q^n+1$, and $r=d/s$  are integers, and $k=\fq$. %As a consequence we are able to construct a large set of integral points on the elliptic curves defined above. 
This is a consequence of the fact that the \emph{orthogonal group} $O_2(\fq)$ acts on the surface $\mathcal{S}$.
%In this situation,  the group of automorphisms of $\mathcal{S}$ contains a copy of  the \emph{orthogonal group} $O_2(\fq)$:  the set of $2\times 2$ matrices $A$ over $\fq$ satisfying  $A^tA=~I_2$.
\begin{lemma}\label{ident2}
 Suppose $A= (a_{ij})\in O_2(\ff_q)$. Then for any non-negative integer $m$, the identity
\[
(a_{11}X+a_{12}Y)^{q^m+1}+(a_{21}X+a_{22}Y)^{q^m+1}=X^{q^m+1}+Y^{q^m+1}
\]
holds over $\ff_q[X,Y]$.
\end{lemma}
%\begin{proof} This follows from the fact that $X^{q^k+1}+Y^{q^k+1}$ is a binary hermitian form over $\fq$.
%\end{proof}

\begin{proof}
This is proved by rewriting the binomial $X^{q^m+1}+Y^{q^m+1}$ as a product of matrices.
\begin{eqnarray*} 
X^{q^m+1}+Y^{q^m+1} & =  &\left(\begin{array}{ll} X^{q^m} & Y^{q^m}  \end{array}\right)  \left(\begin{array}{ll} X \\ Y  \end{array}\right)
= \left(\begin{array}{ll} X^{q^m} & Y^{q^m}  \end{array}\right)  A^tA\left(\begin{array}{ll} X \\ Y  \end{array}\right)\\
& = & \left[A \left( \begin{array}{ll}  X^{q^m} \\ Y^{q^m}  \end{array}\right)\right]^t \left[A \left( \begin{array}{ll}  X \\ Y  \end{array}\right)\right] \\
& =  & \left(\begin{array}{ll} (a_{11}X+a_{12}Y)^{q^m}\\ (a_{21}X+a_{22}Y)^{q^m} \end{array}\right)^t \left(\begin{array}{ll} a_{11}X+a_{12}Y\\ a_{21}X+a_{22}Y \end{array}\right)\\
& =& (a_{11}X+a_{12}Y)^{q^m+1}+(a_{21}X+a_{22}Y)^{q^m+1}
\end{eqnarray*}   
\qedhere
\end{proof}

%Observe that the matrices satisfying the hypothesis of the previous lemma forms the  \emph{orthogonal group} $\operatorname{O}(2)$. 

%The next result describes the action of ${O}_2(\fq)$ on the surface $\mathcal{S}$. 

%In fact, one can show a lot more\cite{shioda_arithmetic_1988}: over $\ff_{q^{2k}}$, the group of automorphisms of such a surface is the \emph{projective unitary group}, $\operatorname{\pp U}(4,q^k)=\{(a_{ij})\in\operatorname{GL}_4(\ff_{q^{2k}}): (a_{ij})^t(a_{ij}^{q^k})=I_4\}/\{scalars\}$.

\begin{corollary}\label{shioda}
Let $n$ and $m$ be positive integers. Let $d=q^n+1$ and  $s=q^m+1$, and suppose that $r=d/s$  is an integer. Let  $\mathcal{S}/\fq$ be the surface defined by \eqref{eq:surf_s}. 
Then for any two matrices $A=(a_{ij})$ and $B=(b_{ij})$ in $O_2(\fq)$, the rational curve $(a_{11}u^r+a_{12},a_{21}u^r+a_{22},b_{11}u+b_{12},b_{21}u+b_{22})$ is contained in $\mathcal{S}$.
\end{corollary}
\begin{proof}
In the previous lemma take $X=u^r$ and $Y=1$ to obtain 
\begin{equation*}
(a_{11}u^r+a_{12})^{q^m+1} + (a_{21}u^r+a_{22})^{q^m+1}= (u^r)^{q^m+1}+1= u^{q^n+1}+1.
\end{equation*}
Another application of the previous lemma, with $X=u$, $Y=1$ and $m=n$, implies
\begin{equation*}
(b_{11}u+b_{12})^{q^n+1} + (b_{21}u+b_{22})^{q^n+1}= u^{q^n+1}+1.
\end{equation*}
So
$$
(a_{11}u^r+a_{12})^{q^m+1} + (a_{21}u^r+a_{22})^{q^m+1}=(b_{11}u+b_{12})^{q^n+1} + (b_{21}u+b_{22})^{q^n+1},
$$
as claimed.\qedhere
\end{proof}
We are ready to prove the main result of this section.

\begin{theorem}\label{the:cubquarttwist}
Let $n$ be an odd positive integer. Then over $\ff_{q^2}(t)$, the following elliptic curves have a separable  integral point for each positive divisior of $n$:
\begin{enumerate}
\item  $y^2=x^3-(t^{q^n+1}+1)x$, when $q\equiv 3\mod 4$.
\item $y^2=x^3+t^{q^n+1}+1$, when $q\equiv 2\mod 3$.
 
\end{enumerate}
\end{theorem}
\begin{proof}
Let $m$ be a divisor of $n$ and let $d=q^n+1$ and $s=q^m+1$. Then the fact that $n$ is odd implies that $r=d/s$ is an integer. 

To prove part (1), we first notice that  the assumption $q\equiv 3 \mod 4$ implies that $4\mid s$. Consequently, \eqref{quad_rationalmapshioda} defines a rational map from the surface $\mathcal{S}/\ff_{q^2}$, defined by \eqref{eq:surf_s},  to the surface in $\pp^2\times\af^1$ defined by $y^2z=x^3+(t^{q^n+1}+1)xz^2$. 

%Over $\ff_{q^2}$ we can choose an orthogonal matrix $A= \left(\begin{array}{ll} a & b \\ c & d \end{array}\right)$ with $cd\neq 0$. 

Let $B$ be the $2\times 2$ identity matrix and let  $A= \left(\begin{array}{ll} a & b \\ c & d \end{array}\right)\in O_2(\ff_{q^2})$  be such that $cd\neq 0$. Notice that such a matrix exists over $\ff_{q^2}$. From \rf{Corollary}{shioda} we obtain
the rational curve $(au^r+b,cu^r+d,u,1)\subset \mathcal{S}$. Its image under the rational map (\ref{quad_rationalmapshioda}) is the curve
$$
\left(\left[-(cu^r+d)^{({q^m+1})/{2}}:(au^r+b)^{({q^m+1})/{2}}(cu^r+d)^{({q^m+1})/{4}}: 1\right],
u\right)
$$ 
on the surface $y^2z=x^3+(t^{q^n+1}+1)xz^2$. Consequently, we obtain the point 
$$
R_m=\left(-(ct^r+d)^{({q^m+1})/{2}}, (at^r+b)^{({q^m+1})/{2}}(ct^r+d)^{({q^m+1})/{4}}
\right)
$$
on the  elliptic curve $E$ defined over $\ff_{q^2}(t)$ by $y^2=x^3-(t^{q^n+1}+1)x$. The binomial expansion of the first coordinate of $R_m$ contains the non-zero monomial $-cd^{({q^m-1})/{2}} t^r/2$, thus for each divisor $m$ of $n$ we obtain a distinct integral point on $E$. 

Let $D=t^d+1$. The Frobenius orbit of a point $(x_0,y_0)$ on $E$ is given by
$$
\left\{ \left(x_0^{q^i}/D^{(q^i-1)/2},y_0^{q^i}/D^{3(q^i-1)/4}\right): i\in\nn\right\}.
$$
Therefore $R_m$ is a separable integral point, since for any $i\in \nn$, the polynomial $D^{(q^i-1)/2}(ct^r+d)^{({q^m+1})/{2}}$ is not a $q$-th power.

The second part is proved in a similar way.  Under the assumption $q\equiv 2 \mod 3$, all the hypothesis of \rf{Corollary}{shioda} and Example \ref{3tdm1} are satisified. The image of the curve $(au^r+b,cu^r+d,u,1)\subset \mathcal{S}$ under the rational map (\ref{cubic_rationalmapshioda}) yields the integral point
$$
S_m=\left(-(ct^r+d)^{({q^m+1})/{3}},(at^r+b)^{({q^m+1})/{2}}\right)
$$
 on the curve $E_0:y^2=x^3+t^{q^n+1}+1$ over $\ff_{q^2}(t)$. The binomial expansion of $-(ct^r+d)^{{(q^m+1)}/{3}}$ shows that $S_m$  is a distinct point on $E_0$ for distinct divisors $m$ of $n$. It is easy to check, using part (3) of Remark \ref{rm:frob_end},  that $S_m$ is a separable integral point for all $m$.
\end{proof}

\section{Non-isotrivial elliptic curves with a large set of  integral points}\label{sec:non_isot}
The following theorem provides examples of non-isotrivial  elliptic curves with an arbitrarily large set of integral points.
\begin{theorem}\label{the:non_isot}
Let $n$ be a positive odd integer and  $a,b,c\in \fq$ with $ac\neq 0$. Then, for $d=q^n+1$ the elliptic curves defined over $\fqt$ by
$$
y^2=x(x+1)(x+t^d)
$$ and 
$$
y^2=(ax^2+bx+c)(cx^2+bt^dx+at^{2d})
$$
 contain an integral point for every divisor of $n$.
\end{theorem}
\begin{proof}
 Let $m$ be a divisor of $n$, then $r=(q^n+1)/(q^m+1)$ is an integer. 
 
 The first elliptic curve is the Legendre curve discussed in Example \ref{ex:legendre}. In this example  we showed that over the rational function field $\fq(u)$, the  curve $E_1:y^2=x(x+1)(x+u^{q^m+1})$ contains the point $P=(u,u(u+1)^{(p^m+1)/2})$. If we let $K=\fq(t)$ be the extension of $\fq(u)$ defined by $u=t^r$, then $E_1/K$ is defined by the equation $y^2=x(x+1)(x+t^d)$ and contains the point $(t^r,t^r(t^r+1)^{(p^m+1)/2})$.
 
 For the second elliptic curve, we let $f(x)=ax^2+bx+c$. Then for every divisor $m$ of $n$ it is easily verified that
 $$
(t^r,t^rf(t)^{(p^m+1)/2})
 $$ 
 is an integral point on $y^2=(ax^2+bx+c)(cx^2+bt^dx+at^{2d})$.
\end{proof}
\begin{remark}
Abramovich showed in  \cite[Corollary 1]{abramovich_uniformity_1997} that the existence of  a uniform  bound on the number of integral points on semistable elliptic curves over $\qq$ is a consequence of the Lang-Vojta conjecture. Our previous result shows that over function fields  no such uniform bound exists.  Indeed, for an odd $n$ with a large number of divisors, the curve  $y^2=x(x+1)(x+t^{q^n+1})$ contains a large set of integral points and is a semistable elliptic curve (see \cite[Section 7]{ulmer_explicit_legendre_2012}).% It is not yet clear what this construction implies to the Lang-Vojta conjecture over $\fqt$. %In a future work, we hope to address the implications of this result  to the the Lang-Vojta conjecture over $\fqt$.
\end{remark}
%\section{Proof of Theorem \ref{the:counterexamples}}\label{sec:langvojta}

\section{An isotrivial counter-example to an analogue of the Lang-Vojta conjecture over $\fqt$}\label{sec:langvojta}
%Please refer to \cite{iitaka_introduction_1982} for more details on the definitions and notations used in this proof.
%As discussed in the introduction, the work of  Abramovich \cite{abramovich_langs_1996} shows that the Lang-Vojta conjecture over a number field $L$ implies that the number of $S$-integral points on the universal family of quadratic twists of an elliptic curve is uniformly bounded. In this section we will provide a proof of the analogous implication over $\fqt$, but first let us recall a few definitions.

%Assume $\fq$ has characteristic $\neq 3$. 

Recall the statement of Theorem \ref{the:counterexamples}:  the isotrivial variety defined over $\fqt$ by
\begin{equation}\label{eq:def_loggen_counter}
\kk:z^2=(x^3-x)(y^3-y)%\quad\text{ and }\quad\uu: u^3=(x^2-x)(y^2-y)(z^2-z) 
\end{equation}
 is of log-general type   and   has a Zariski dense set of separable integral points when $q\equiv 3\mod 4$. %and $q\equiv 2\mod 3$, respectively. %
 In this section we  prove this statement.%  for the surface $\kk$, as it should not be hard for the reader to see that minor modifications of the argument yields a proof for the variety $\uu$. %As discussed in the introduction this is a counter-example to a natural translation of the Lang-Vojta conjecture. 

\begin{remark}
 The arguments in this section can be adapted to prove that the variety defined over $\fqt$ by $u^3=(x^2-x)(y^2-y)(z^2-z) $ is of log-general type and, when $q\equiv 2\mod 3$, has a Zariski dense set of separable integral points. We leave the details for the reader.
\end{remark}

We start from the definition of  a log-general type variety.
\begin{definition}
Let $V$ be a variety defined over a field $k$.
\begin{itemize}
 \item Let $\bar{V}$ be  a nonsingular complete variety and $D$ be a divisor on $\bar{V}$ with simple normal crossings. We say that $\bar{V}$ is a \emph{smooth completion} of $V$ with \emph{smooth boundary} $D$, if $V=\bar{V}\backslash D$. 
 \item  Let $V$ be a variety with a smooth completion $\bar{V}$ and smooth boun\-da\-ry $D$. If  for every natural number $m$ we have $l_{\vb}(m(K_{\vb}+D))=0$, define  $\kb(V)=-\infty$. Otherwise, let  
 $$
 \kb(V)=\max_{m\in \nn}\{\dim\phi_m(\vb) \} ,
 $$
  where $\phi_m$ is the rational map associated to the divisor $m(K_{\vb}+D)$. The number $\kb(V)$ is called the \emph{logarithmic Kodaira dimension} of $V$.
 \item  $V$ is said to be of \emph{log-general type} if $\kb(V)=\dim(V)$.
\end{itemize}
\end{definition}

\begin{remark}\label{rm:kod_dimension}
The logarithmic Kodaira dimension satisfies the following properties:
\begin{enumerate}
\item $\kb(V\times W)=\kb(V)+\kb(W)$ (Theorem 11.3, \cite{iitaka_introduction_1982});
\item If $C$ is a curve and $D\subset C$ is finite set of points with $|D|\geq 3$ then  $\kb(C\backslash D)=1$ (\S 11.2(d),\cite{iitaka_introduction_1982});
\item If $f:V\longrightarrow W$ is an \'etale covering between nonsingular varieties then $\kb(V)=\kb(W)$ (Theorem 11.10, \cite{iitaka_introduction_1982}).
\end{enumerate}
 
\end{remark}

The above properties will be useful in the  proof of the first part of Theorem \ref{the:counterexamples}.% for the surface $\kk$.

\begin{lemma}\label{cubicfamily}
$\kk$, defined as in \eqref{eq:def_loggen_counter}, is a variety of log-general type. %over $\fqt$. 
\end{lemma}
\begin{proof}
To prove that $\kk$ is a log-general type variety, denote by $\bar{\kk}$ the projective completion of $\kk\subset \pp^3$ . The projection in the $x,y$ coordinates define a rational map  $\pi:\bar{\kk}\dashedrightarrow  \pp^1\times\pp^1$ such that the singular locus $S$ of $\bar{\kk}$ are contained in the  fibers over $0,1$ and $\infty$. Therefore $\bar{\kk}$ is nonsingular above $W:=(\pp^1\backslash\{0,1,\infty\})\times(\pp^1\backslash\{0,1,\infty\})$.

Let $\beta:\bar{V}\longrightarrow \bar{\kk}$ be the blow-up of $\bar{\kk}$ along $S$ and let $V:=\beta^{-1}(\bar{\kk}\backslash S)$. We can always assume that $\bar{V}$ is a smooth completion of $V$ (Theorem 7.21, \cite{iitaka_introduction_1982}). By definition, $\kk$ will be of log-general type if $V$ is. Since $\dim V=\dim \kk=2$,  we only need to show  that $\kb(V)=2$.

The restriction of $\pi$ to $\pi^{-1}(W)$ is an \'etale covering of $W$, therefore  $\pi\circ\beta:V\longrightarrow W$ is an \'etale covering. To finish the proof of the result, we use properties (1) -- (3) in Remark \ref{rm:kod_dimension} to show that
$$
\kb(V)=\kb(W)=\kb(\pp^1\backslash\{0,1,\infty\})+ \kb(\pp^1\backslash\{0,1,\infty\})=2.
$$ \qedhere
\end{proof} 
We now give a  proof of Theorem \ref{the:uniformityofquadtwistsnumber} that works for isotrivial elliptic curves. %The next result is the last step in the proof of Theorem \ref{the:counterexamples}.

\begin{theorem}\label{the:vojta_uniformity}
Let $y^2=f(x)$ be an elliptic curve defined over $\fq$. If the set of separable integral points on the affine variety defined over $\fqt$ by $z^2=f(x_2)f(x_2)$  is not Zariski dense, then for any  non-zero square-free  polynomial $D\in\fq[t]$ the number of separable integral points on the quadratic twists $Dy^2=f(x)$ is bounded independently of $D$.
\end{theorem}
\begin{proof}
Let $E_1:y^2=f(x)$ and $\kk:z^2=f(x_1)f(x_2)$.%, both defined over $\fq$ by assumption. %Then $\kk$ is the singular Kummer surface associated to $E_1\times~E_1$, and $E_1$ is an isotrivial variety if and only if $\kk$ is isotrivial.

Assume that the set of separable integral points on $\kk$ are not Zariski dense. Thus there  exists a polynomial $g(x_1,x_2,z)$ with integral coefficients and prime to $z^2-f(x_1)f(x_2)$ such that all separable integral points in $\kk$ are contained in 
\[
\left\{\begin{array}{ll}
z^2=f(x_1)f(x_2)\\
g(x_1,x_2,z)=0
\end{array} \right..
\]
In the system above we  use the first equation to eliminate   from $g(x_1,x_2,z)$ powers of $z$ of order $\geq 2$. That way we find polynomials $g_0=g_0(x_1,x_2)$ and $g_1=g_1(x_1,x_2)$ such that the separable integral points on $\kk$ satisfy the equation
\begin{equation}\label{eq:sep_pnt}
g_0(x_1,x_2)+g_1(x_1,x_2)z=0.
\end{equation}
Notice that $g_0$ and $g_1$ are not  both identically zero, otherwise $g$ would be divisible by $z^2-f(x_1)f(x_2)$.  Also, we have that $\deg g_0, \deg g_1\leq \deg g$.

Let $E_D$ be the twist $Dy^2=f(x)$ of $E_1$ and $\phi:E_D\times E_D \longrightarrow \kk$ be the morphism defined by 
\begin{equation}\label{eq:map_phi}
 ((x_1,y_1),(x_2,y_2))\longmapsto (x_1,x_2,Dy_1y_2).
\end{equation}

The $q$-Frobenius action on $\kk$ is given by $(x_1,x_2,z)\longmapsto (x_1^q,x_2^q,z^q)$. Therefore, from Example \ref{ex:sep_point_quad_twist}  it follows that if $P$   is a separable integral point on $E_D\times E_D$ then the image $\phi(P)$ is a separable integral point on $\kk$.  %This is clear if $E_1$ is non-isotrivial. If $E_1$ is isotrivial then $\fr_{\kk}$ and $\fr_{E_D\times E_D}$, the respective frobenius endomorphisms of $\kk$ and $E_D\times E_D$, satisfy $\phi\circ \fr_{E_D\times E_D}=\fr_{\kk}\circ\phi $. This proves our claim (?). 
In particular, by \eqref{eq:sep_pnt} the separable integral points  $((x_1,y_1),(x_2,y_2))$ on  $E_D\times E_D$ satisfy the equation
\begin{equation}\label{eq:ident_sep_pnt}
g_0(x_1,x_2)+Dg_1(x_1,x_2)y_1y_2=0. 
\end{equation}

Fix $D\neq 0$. If  for every separable integral point $(x_0,y_0)\in E_D$ we have that $\overline{g}_0(X):=g_0(x_0,X)$ and $\overline{g}_1(X):=g_1(x_0,X)$ are  both identically zero as polynomials in $X$,
 then  $x_0$ is a root of a polynomial  of degree $\leq \deg g$. Indeed, in such a case $x_0$ is a root of the coefficients of $\overline{g}_0(X)$ and $\overline{g}_1(X)$. As a consequence,  the number of separable integral points on $E_D$ is bounded by $2\deg g$, independently of $D$.

Therefore we may assume that there exists a separable integral point $(x_0,y_0)\in E_D$ such that $\overline{g}_0(X)$ and $\overline{g}_1(X)$ are not both identically zero polynomials. Assume further that $y_0 \neq 0$.
Thus, from \eqref{eq:ident_sep_pnt} we find that  any other separable integral point $(x,y)\in E_D$  satisfy the polynomial equation
\begin{equation}\label{eq:rel_sep_point}
 h_0(x)+h_1(x)y=0,
\end{equation}
with $h_0(X)=\overline{g}_0(X)$ and $h_1(X)=D\overline{g}_1(X)y_0$. Note that $\deg h_0,\deg h_1 \leq \deg g $.

The number of separable integral points $(x,y)\in E_D$ that satisfy $h_1(x)=0$ is bounded above by $ 2 \deg h_1\leq 2\deg g$, and this bound  does not depend on $D$. On the other hand, if $(x,y)\in E_D$ is such that $h_1(x) \neq 0$ then by \eqref{eq:rel_sep_point} we have $y=-h_0(x)/h_1(x)$. From the equation defining $E_D$, it follows that $x$ satisfy the polynomial equation
\[
 f(x)h_1(x)^2 -Dh_0(x)^2 =0
\] 
 of degree at most $2\deg g +3$. This means that the number of separable integral points $(x,y)\in E_D$ with $h_1(x)\neq 0$ is bounded above by $4\deg g+6$. Once more we obtain an upper bound that does not depend on $D$, and the result follows.
\end{proof}
%\begin{remark}
% Notice that the same argument also provides a proof of the previous result in the case where the elliptic curve $y^2=f(x)$ is  non-isotrivial  or is defined over a field of characteristic 0. The key point is that, in these cases, the morphism $\phi$, defined as in \eqref{eq:map_phi}, still maps integral points on $E_D\times E_D$ to integral points on $\kk$. 
%\end{remark}

Below we provide the last ingredient  in the proof of Theorem \ref{the:counterexamples}.
\begin{corollary}
Let $\kk$ be defined as in \eqref{eq:def_loggen_counter}. If $q\equiv 3\mod 4$ then $\kk$ has a Zariski dense set of separable integral points.
\end{corollary}
\begin{proof}
Suppose that the set of separable integral points on $\kk$ is not Zariski dense.  Theorem \ref{the:vojta_uniformity}  implies that the number of separable integral points on the family of quadratic twists $Dy^2=x^3-x$ remains bounded as $D$ runs through square-free polynomials over $\fq$. But when $q\equiv 3\mod 4$, this contradicts Theorem \ref{the:maintheorem} and  completes the proof.
\end{proof}

\section{Elliptic curves with an explicit large set of linearly independent points} \label{sec:largerankelliptic}

In this section we prove that the points found in Theorem \ref{the:pmod4} are linearly independent.

 %But first let us recall some classic results on elliptic curves over function fields.

%Let $E$ be an elliptic curve defined over a finite field $k$ and let $L=k(C)$ be the function field of a curve $C$. It is not hard to show that the Mordell-Weil group of $E(L)$ and $\mor_k(C,E)$ are isomorphic, where $\mor_k(C,E)$ is the abelian group of morphisms from $C$ to $E$. Moreover, the torsion group of $\mor_k(C,E)$ is the set $\mor^0_{k}(C,E)$ of constant maps, which is equal to $E(k)$ under the above isomorphism. 

%Using Galois cohomology, one can prove that these groups sit in the following exact sequence:\[\xymatrix{   0 \ar@{->}[r] & E(k)\ar@{->}[r] & E(L) \ar@{->}[r]^\Psi &  E_A(\fqt )}.\]
%Moreover, we have that $\Gamma\circ\Psi=[2]$, the multiplication by 2 map on $E_A(\fqt)$. 

\begin{theorem}\label{the:li} 
 Let $m_1, m_2,\ldots,m_l$ be distinct odd positive integers. Suppose $A(t)=A_0(\fr)$ is an $\fq$-additive squarefree polynomial such that $X^{m_i}-1$ divides $A_0(X)$, for all $1\leq i\leq l$. Let $E_A$ be the elliptic curve defined by $A(t)y^2=x^3-x$. Suppose  $q\equiv 3\mod 4$.

 Then the points $\{Q_1,\ldots,Q_l\}\subset E_A(\fq(t))$ defined by \eqref{eq:points_on_twist} are $\zz$-linearly independent.
\end{theorem}
\begin{proof}
 Let $C/\fq$  be the smooth projective curve defined by  $s^2=A(t)$, and let $L=\fq(C)$ be its function field. Let  $E/\fq$ be the elliptic curve defined by $y^2=x^3-x$ and $E_A/\fqt$ be the elliptic curve defined by $A(t)y^2=x^3-x$.  Notice that $E_A$ and $E$ are isomorphic over $L$ via the isomorphism
\begin{equation*}
(x,y)\longmapsto (x,sy).
\end{equation*}

The set $\mor_{\fq}(C,E)$  of $\fq$-morphisms from $C$ to $E$ is an abelian group canonically isomorphic to the Mordell-Weil group $E(L)$ (see \cite[Proposition 6.1]{ulmer_park_city_lect_2011}).

For $P=(F,G)\in E_A(\fqt)$, we let $ \phi_P:C\longrightarrow E$ be the $\fq$-morphism  $\phi_P(t,s)=(F(t),sG(t))$. As a consequence of the above discussion, the map
\begin{equation}
 \begin{array}{rll} \label{eq:intmap}
\Gamma:E_A(\fqt)&\longrightarrow &\mor_{\fq}(C,E)\\
 P&\longmapsto&\phi_P
 \end{array}
 \end{equation}
is an injective group homomorphism.

%So to prove that $\{P_1,\ldots,P_n\}\subset E_A(\fqt)$ is $\zz$-linearly independent, it is sufficient to prove that $\{\Gamma(P_1),\ldots,\Gamma(P_n)\}\subset E(L)$ is $\zz$-linearly independent. This is exactly what we will be using in the proof of the next theorem.

Let $Q_i=(F_i,G_i)$ be given  as in \eqref{eq:points_on_twist} and let $\phi_i=\Gamma(Q_i)$. We are left to prove that the set $\{\phi_i\}\subset \mor_{\fq}(C,E)$  is $\zz$-linearly independent.  First notice that $B_i(\fr)$, given as in \eqref{eq:identity}, is a square-free $\fq$-additive polynomial. Consequently,  there exists $\beta_i\in\ff_q^*$ such that $F'_i=\beta_i G^2_i$. Let $\w_E=dx/y$ be the invariant differential on $E$ and let $\w_C$ be the non-zero differential $dt/s$  on $C$.
Thus
\[
 \phi_i^*(\w_E)=\frac{d\phi^*_i(x)}{\phi^*_i(y)}=\frac{F'_idt}{sG_i}=\beta_i G_i\w_C.
\]

Since  the $G_i$'s   have distinct degrees,  we have that $\{\phi_i^*(\w_E)/\w_C\}\subset L$ is an $\fq$-linearly independent set. The fact that  $E$ is supersingular \cite[Example 4.5]{silverman_arithmetic_1986} allows us to use the lemma below to finish the proof of our result. %, which roughly says that  if the pullback of differentials is linearly independent, then necessarily the maps have to be  linearly independent.
\end{proof}

\begin{lemma}
 Let $C$ be a smooth projective curve and $E$ be a supersingular elliptic curve, both defined over $\ff_q$. Let $\w_E$ and $\w_C$ be non-zero differentials on $E$ and $C$, respectively. Let $\{\phi_i\}_{i=1}^n$ be a subset of $\mor_{\fq}(C,E)$, the set of $\fq$-morphisms from $C$ to $E$.
 
If $\{\phi_i^*(\w_E)/\w_C\}^n_{i=1}$ is  an $\ff_q$-linearly independent set in  $\ff_q(C)$ then $\{\phi_i\}_{i=1}^n$ is a set of $\mathbb{Z}$-linearly independent morphisms in $\mor_{\fq}(C,E)$.
\end{lemma}
\begin{proof}
 For an integer $m$, let $[m]$ denote the multiplication-by-$m$ map on $E$. Suppose, by contradiction, that there exists a non-trivial $\mathbb{Z}$-linear combination $\sum_{i=1}^n [a_i]\phi_i=O$. Let $p^j$ be the largest power of $p$ that divides $a_i$, for  $1\leq i\leq n$. Then
\[
[p^j]\left(\sum_{i=1}^n [b_i]\phi_i\right)=O,
\]
where $b_i={a_i}/{p^j}$. The $p$-torsion group of a supersingular elliptic curve is trivial \cite[Theorem 3.1]{silverman_arithmetic_1986}, therefore  $\sum_{i=1}^n [b_i]\phi_i=O$ is a $\mathbb{Z}$-linear combination of the $\phi_i$'s with at least one of its coefficients prime to $p$, say $b_0$. %Let us write $b_i:={a_i}/{p^j}$.

%From \cite[Corollary 5.3]{silverman_arithmetic_1986}, $[m]^*\w_E=m\w_E$, for any $m\in\zz$. This together with the fact that  $\phi_i^*:\Omega_E\longrightarrow \Omega_C$ is a linear map between the space of differential forms of $E$ and $C$ imply that
The linearity of the pullback of differentials (see \cite[Theorem 5.2]{silverman_arithmetic_1986} for a proof of this fact when $C$ is an elliptic curve) implies that
\[
 0=\left(\sum_{i=1}^n[b_i]\phi_i\right)^*(\w_E)=\sum_{i=1}^n (\phi_i^*b_i^*)(\w_E)=\sum_{i=1}^n b_i\phi_i^*(\w_E).
\]
Hence
\[
 \sum_{i=1}^n b_i\frac{\phi_i^*(\w_E)}{\w_C}=0.
\]
By assumption $\{\phi_i^*(\w_E)/\w_C\}^n_{i=1}$ is an $\fq$-linearly independent set. Therefore   $p\mid b_i$ for all $i$. But this contradicts the fact that $p$ is prime to $b_0$, and the result follows.
\end{proof}

\begin{remark}
 Notice that a similar argument can  be used to prove the linear independency of the explicit points on the cubic twists $A(t)x^3=y^2-y$ found in Theorem \ref{the:pmod4}. 
\end{remark}

\section*{Acknowledgments}

I would like to thank Jos\'e Felipe Voloch for his invaluable support and guidance during the completion of this work. I am also thankful to  Douglas Ulmer  for several discussions about the topics in this article and, in particular, for introducing me to the elliptic curves in Theorem \ref{the:non_isot}.

\bibliography{twists}{}

\begin{thebibliography}{10}

\bibitem{abramovich_uniformity_1997}
D.~Abramovich.
\newblock Uniformity of stably integral points on elliptic curves.
\newblock {\em Invent. Math.}, 127(2):307--317, 1997.

\bibitem{berger_towers_2008}
L.~Berger.
\newblock Towers of surfaces dominated by products of curves and elliptic
  curves of large rank over function fields.
\newblock {\em J. Number Theory}, 128(12):3013--3030, 2008.

\bibitem{bouw_ordinary_2004}
I.~I. Bouw, C.~Diem, and J.~Scholten.
\newblock Ordinary elliptic curves of high rank over {$\overline {\Bbb F}\sb
  p(x)$} with constant {$j$}-invariant.
\newblock {\em Manuscripta Math.}, 114(4):487--501, 2004.

\bibitem{caporaso_uniformity_1997}
L.~Caporaso, J.~Harris, and B.~Mazur.
\newblock Uniformity of rational points.
\newblock {\em J. Amer. Math. Soc.}, 10(1):1--35, 1997.

\bibitem{diem_ordinary_2007}
C.~Diem and J.~Scholten.
\newblock Ordinary elliptic curves of high rank over {$\overline{\Bbb F}\sb
  p(x)$} with constant {$j$}-invariant. {II}.
\newblock {\em J. Number Theory}, 124(1):31--41, 2007.

\bibitem{elkies_mordell-weil_1994}
N.~D. Elkies.
\newblock Mordell-{W}eil lattices in characteristic {$2$}. {I}. {C}onstruction
  and first properties.
\newblock {\em Internat. Math. Res. Notices}, (8):343 ff., approx.\ 18 pp.\
  (electronic), 1994.

\bibitem{hindry_canonical_1988}
M.~Hindry and J.~H. Silverman.
\newblock The canonical height and integral points on elliptic curves.
\newblock {\em Invent. Math.}, 93(2):419--450, 1988.

\bibitem{iitaka_introduction_1982}
S.~Iitaka.
\newblock {\em An Introduction to Birational Geometry of Algebraic Varieties,
  {GTM,} Vol. 76}.
\newblock {Springer-Verlag}, 1982.

\bibitem{Mason_diophantine}
R.~C. Mason.
\newblock {\em Diophantine equations over function fields}, volume~96 of {\em
  London Mathematical Society Lecture Note Series}.
\newblock Cambridge University Press, Cambridge, 1984.

\bibitem{shioda_explicit_1986}
T.~Shioda.
\newblock An explicit algorithm for computing the {P}icard number of certain
  algebraic surfaces.
\newblock {\em Amer. J. Math.}, 108(2):415--432, 1986.

\bibitem{silverman_arithmetic_1986}
J.~H. Silverman.
\newblock {\em The arithmetic of elliptic curves}, volume 106 of {\em Graduate
  Texts in Mathematics}.
\newblock Springer-Verlag, New York, 1992.
\newblock Corrected reprint of the 1986 original.

\bibitem{tate_rank_1967}
J.~Tate and I.~R. {\v{S}}afarevi{\v{c}}.
\newblock The rank of elliptic curves.
\newblock {\em Dokl. Akad. Nauk SSSR}, 175:770--773, 1967.

\bibitem{ulmer_elliptic_2002}
D.~Ulmer.
\newblock Elliptic curves with large rank over function fields.
\newblock {\em Ann. of Math. (2)}, 155(1):295--315, 2002.

\bibitem{ulmer_park_city_lect_2011}
D.~Ulmer.
\newblock Elliptic curves over function fields.
\newblock In {\em Arithmetic of {$L$}-functions}, volume~18 of {\em IAS/Park
  City Math. Ser.}, pages 211--280. Amer. Math. Soc., Providence, RI, 2011.

\bibitem{ulmer_explicit_legendre_2012}
D.~Ulmer.
\newblock Explicit points on the legendre curve.
\newblock {\em arXiv:1002.3313v2 [math.NT]}, 2012.

\bibitem{ulmer_mordell_jacobians_2012}
D.~Ulmer.
\newblock On mordell-weil groups of jacobians over function fields.
\newblock {\em Journal of the Institute of Mathematics of Jussieu},
  FirstView:1--29, 2012.

\end{thebibliography}
\bibliographystyle{abbrv}

\end{document}